\newcommand{\C}{{\mathbb C}}
\newcommand{\Z}{{\mathbb Z}}
\newcommand{\Q}{{\mathbb Q}}
\newcommand{\W}{{\mathcal W}}
\newcommand{\hyp}{{\mathbb H}}
\newcommand{\arrow}[1]{\stackrel{#1}{\longrightarrow}}
\newcommand{\Adot}{\mathbf A^\bullet}
\newcommand{\ob}{{\stackrel{\circ}B}}
\newcommand{\vdual}{{\mathcal D}}
\newtheorem{defn0}{Definition}[section]
\newtheorem{prop0}[defn0]{Proposition}
\newtheorem{conj0}[defn0]{Conjecture}
\newtheorem{thm0}[defn0]{Theorem}
\newtheorem{lem0}[defn0]{Lemma}
\newtheorem{corollary0}[defn0]{Corollary}
\newtheorem{example0}[defn0]{Example}
\newtheorem{remark0}[defn0]{Remark}
\newtheorem{question0}[defn0]{Question}
\newenvironment{thm}{\begin{thm0}}{\end{thm0}}
\newenvironment{lem}{\begin{lem0}}{\end{lem0}}
\newenvironment{cor}{\begin{corollary0}}{\end{corollary0}}
\newcommand{\lemref}[1]{Lemma~\ref{#1}}
\title{Natural Commuting of Vanishing Cycles and the Verdier Dual}
 \keywords{vanishing cycles, nearby cycles, Verdier dual, constructible complexes}
\subjclass[2000]{32B15, 32C35, 32C18, 32B10}
\author{David B. Massey}
\date{}
\begin{document}

\baselineskip = 14pt

\begin{abstract} We prove that the shifted vanishing cycles and nearby cycles commute with Verdier dualizing up to a {\bf natural} isomorphism, even when the coefficients are not in a field.
\end{abstract}

\maketitle

\sloppy




\section{Introduction} In this short, technical, paper, we prove a result whose full statement is missing from the literature, and which may be surprising to some experts in the field.  To state this result, we need to use technical notions and notations; references are  \cite{kashsch}, \cite{dimcasheaves}, \cite{schurbook}, and Appendix B of \cite{numcontrol}. We should remark immediately that the definition that we use (see below) for the vanishing cycles is the standard one, which is shifted by one from the definition in   \cite{kashsch}

We fix a base ring, $R$, which is a commutative, regular, Noetherian ring, with finite Krull dimension (e.g., $\Z$, $\Q$, or $\C$). Throughout this paper, by a topological space, we will mean a locally compact space. When we write that $\Adot$ is complex of sheaves on a topological space, $X$, we mean that $\Adot$ is an object in $D^b(X)$, the derived category of bounded, complexes of sheaves of $R$-modules on $X$. When $X$ is complex analytic, we may also require that $\Adot$ is (complex) constructible, and write $\Adot\in D^b_c(X)$. We let $\vdual=\vdual_X$ denote the Verdier dualizing operator on $D^b_c(X)$. We will always write simply $\vdual$, since the relevant topological space will always be clear.

Suppose that $f:X\rightarrow\C$ is a complex analytic function, where $X$ is an arbitrary complex analytic space, and suppose that we have a complex of sheaves $\Adot$ on $X$. We let $\psi_f$ and $\phi_f$ denote the nearby and vanishing cycle functors, respectively. Henceforth, we shall always write these functors composed with a shift by $-1$, that is, we shall write $\psi_f[-1]:=\psi_f\circ [-1]$ and $\phi_f[-1]:=\phi_f\circ [-1]$. In order to eliminate any possible confusion over indexing/shifting: with the definitions that we are using, if $F_{f, p}$ denotes the Milnor fiber of $f$ at $p\in f^{-1}(0)$ inside the open ball $\ob_\epsilon(p)$ (using a local embedding into affine space), then $H^k(\psi_f[-1]\Adot)_p\cong \hyp^{k-1}(F_{f, p}; \Adot)$ and $H^k(\phi_f[-1]\Adot)_p\cong \hyp^{k}(\ob_\epsilon(p), F_{f, p}; \Adot)$.

\medskip

\noindent The questions in which we are interested is: 

\smallskip

{\bf Do there exist isomorphisms $\vdual\circ\psi_f[-1]\cong \psi_f[-1]\circ\vdual$ and $\vdual\circ\phi_f[-1]\cong \phi_f[-1]\circ\vdual$, even if $R$ is {\bf not} a field, and do there exist such isomorphisms which are {\bf natural}?}

\medskip

The answer is: {\bf yes}. In this paper, we show quickly that such natural isomorphisms exist, even if the base ring is not a field. 

\medskip

Is this result known and/or surprising? Some references, such as \cite{brymont}, \cite{dimcasheaves}, and our own Appendix B of \cite{numcontrol},   state that there exist non-natural isomorphisms, and require that the base ring is a field.  In \cite{schurbook},  Corollary 5.4.4, Sch\"urmann proves the natural isomorphism exists on the stalk level, even when $R$ is not a field.

In the $l$-adic algebraic context, Illusie proves in \cite{illusie} that the Verdier dual and nearby cycles commute, up to natural isomorphism. M. Saito proves the analogous result in the complex analytic setting, with field coefficients, in \cite{msaitomod} and \cite{msaitodual}. One can obtain our full result by combining Proposition 8.4.13, Proposition 8.6.3, and Exercise VIII.15 of \cite{kashsch}, though our proof here is completely different. In fact, our proof is similar to the discussion on duality of local Morse data following Remark 5.1.7 in \cite{schurbook}.

\smallskip

Our proof is relatively simple, and consists of three main steps: proving a small lemma about pairs of closed sets which cover a space, using a convenient characterization/definition of the vanishing cycles, and using that the stratified critical values of $f$ are locally isolated. The nearby cycle result follows as a quick corollary of the result for vanishing cycles.

\smallskip

We thank J\"org Sch\"urmann for providing valuable comments and references for related results.

\section{Two Lemmas}

We shall use $\simeq$ to denote natural isomorphisms of functors.

\smallskip

The following is an easy generalization of the fact that, if $i$ is the inclusion of an open set, then $i^*\simeq i^!$ (see, for instance, Corollary 3.2.12 of \cite{dimcasheaves}.

\begin{lem}\label{lem1} Suppose $Z$ is a closed subset of $X$, and let $j:Z\hookrightarrow X$ denote the inclusion. Let $D^b_Z(X)$ denote the full subcategory of $D^b(X)$ of complexes $\Adot$ such that $Z\cap\operatorname{supp}\Adot$ is an open subset of  $\operatorname{supp}\Adot$. Then, there is a natural isomorphism of functors $j^!\simeq j^*$ from $D^b_Z(X)$ to $D^b(Z)$.
\end{lem}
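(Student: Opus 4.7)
The plan is to construct a canonical natural transformation $\eta : j^! \to j^*$ of functors $D^b(X) \to D^b(Z)$, and then verify that $\eta_{\Adot}$ is an isomorphism precisely when $\Adot$ satisfies the support hypothesis.

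For the construction of $\eta$, I would start with the standard attaching distinguished triangle associated to the closed-open decomposition of $X$ by $Z$ and its open complement $U := X \setminus Z$, with $i : U \hookrightarrow X$:
$$j_* j^! \Adot \to \Adot \to i_* i^* \Adot \xrightarrow{+1}$$
(see, e.g., Proposition~2.3.6 of \cite{kashsch}). Applying the exact functor $j^*$ and using the natural isomorphism $j^* j_* \simeq \operatorname{Id}$, valid for any closed inclusion, yields a distinguished triangle, natural in $\Adot$:
$$j^! \Adot \xrightarrow{\eta_{\Adot}} j^* \Adot \to j^* i_* i^* \Adot \xrightarrow{+1}.$$
The leftmost arrow is the candidate natural transformation $\eta$, defined on all of $D^b(X)$.

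It remains to show the third term vanishes whenever $\Adot \in D^b_Z(X)$. Set $S := \operatorname{supp} \Adot$, which is closed in $X$. The hypothesis that $Z \cap S$ is open in $S$ says $S \setminus Z$ is closed in $S$, hence closed in $X$, so
$$W := X \setminus (S \setminus Z)$$
is an open subset of $X$ containing $Z$, with $\Adot|_{W \cap U} = 0$ because $\operatorname{supp}(\Adot|_W) \subseteq Z$. For any $z \in Z$, the open neighborhoods of $z$ contained in $W$ form a cofinal system, so a routine stalk computation gives
$$(i_* i^* \Adot)_z \;\cong\; \varinjlim_{\substack{V \ni z \\ V \subseteq W}} R\Gamma(V \cap U;\ \Adot) \;=\; 0.$$
Thus $j^* i_* i^* \Adot = 0$, and the triangle collapses to show $\eta_{\Adot}$ is an isomorphism.

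The main obstacle here is conceptual rather than technical: one must recognize that the hypothesis on $\operatorname{supp}\Adot$ is tailored precisely to produce an open neighborhood $W$ of $Z$ on which $\Adot$ is supported in $Z$, and that this is exactly the vanishing needed to kill $i_* i^* \Adot$ along $Z$ and collapse the attaching triangle to an isomorphism $j^! \simeq j^*$. Once this is seen, the rest is formal manipulation of the standard six-functor adjunctions for open and closed inclusions.
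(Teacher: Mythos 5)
Your proof is correct, but it takes a genuinely different route from the paper's. The paper factors the closed inclusion $j$ through the support $Y:=\operatorname{supp}\Adot$, forming the Cartesian square with inclusions $m:Y\hookrightarrow X$, $\hat m:Z\cap Y\hookrightarrow Z$, $\hat\jmath:Z\cap Y\hookrightarrow Y$, and then runs a purely formal chain of isomorphisms
$$j^!\Adot \ \cong \ j^!m_*m^*\Adot \ \cong \ \hat m_*\hat\jmath^!m^*\Adot \ \cong \ \hat m_!\hat\jmath^*m^*\Adot \ \cong \ j^*m_!m^*\Adot \ \cong \ j^*\Adot,$$
using base change on Cartesian squares, $\hat m_*\simeq\hat m_!$ for the closed inclusion, and $\hat\jmath^!\simeq\hat\jmath^*$ for the open inclusion (this last step is exactly where the hypothesis enters, since it makes $\hat\jmath$ open). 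You instead work with the attaching triangle $j_*j^!\to\operatorname{id}\to i_*i^*\xrightarrow{+1}$, apply $j^*$ to get $j^!\to j^*\to j^*i_*i^*$, and then prove the third term vanishes by producing an open neighborhood $W$ of $Z$ with $\operatorname{supp}(\Adot)\cap W\subseteq Z$ and computing stalks of $Ri_*$. The paper's argument is slicker and stays entirely within the six-functor formalism with no stalk computations, while yours makes the role of the support hypothesis more transparent (it is precisely what guarantees the open neighborhood $W$, hence the vanishing of $j^*i_*i^*\Adot$) and more directly exhibits the map $j^!\to j^*$ as the one coming from the attaching triangle. Both arguments produce the same canonical natural transformation (the one induced by the counit $j_!j^!\to\operatorname{id}$), so the conclusions agree; you should just be careful to write $Ri_*$ for the derived pushforward in the stalk formula, though your use of $R\Gamma$ in the colimit makes clear you intend this.
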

\begin{proof} Let $\Adot\in D^b_Z(X)$. We will show that the natural map $j^!\rightarrow j^*$ of functors  from $D^b(X)$ to $D^b(Z)$ yields an isomorphism $j^!\Adot \rightarrow j^*\Adot$.

Let $Y:=\operatorname{supp}\Adot$. Let $m:Y\hookrightarrow X$, $\hat m:Z\cap Y\hookrightarrow Z$, and $\hat\jmath:Z\cap Y\hookrightarrow Y$ denote the inclusions. Then,
$$
j^!\Adot \ \cong \ j^!m_*m^*\Adot \ \cong \ \hat m_*\hat\jmath^!m^*\Adot \ \cong \ \hat m_!\hat\jmath^*m^*\Adot \ \cong \ j^*m_!m^*\Adot \ \cong \ j^*\Adot,
$$
where we used, in order, that $\Adot\cong m_*m^*\Adot$, since $Y$ is the support of $\Adot$, Proposition 2.6.7 of \cite{kashsch} on Cartesian squares, that $\hat m_*\simeq \hat m_!$ and $\hat\jmath^!\simeq \hat\jmath^*$, since $\hat m$ is a closed inclusion and $\hat\jmath$ is an open inclusion, Proposition 2.6.7 of \cite{kashsch} again, and that $\Adot\cong m_!m^*\Adot$.
\end{proof}

\bigskip

The lemma that we shall now prove certainly looks related to many propositions we have seen before, and may be known, but we cannot find a reference. The lemma tells us that, in our special case, the morphism of functors described in Proposition 3.1.9 (iii) of \cite{kashsch} is an isomorphism.

\begin{lem}\label{lem2} Let $X$ be a locally compact space, and let $Z_1$ and $Z_2$ be closed subsets of $X$ such that $X=Z_1\cup Z_2$. Denote the inclusion maps by: $j_1: Z_1\hookrightarrow X$,  $j_2: Z_2\hookrightarrow X$,  $\hat\jmath_1: Z_1\cap Z_2\hookrightarrow Z_2$, $\hat\jmath_2: Z_1\cap Z_2\hookrightarrow Z_1$, and $m=j_1\hat\jmath_2=j_2\hat\jmath_1:Z_1\cap Z_2\hookrightarrow X$.

Then, we have the following natural isomorphisms
\begin{equation}\label{eq}
\hat\jmath_1^*j_2^! \ \simeq \ m^*{j_2}_!j_2^! \ \simeq \ m^*{j_1}_*{\hat\jmath}_2{}_!\hat\jmath_2^!j_1^* \ \simeq \ \hat\jmath_2^!j_1^*.
\end{equation}
\end{lem}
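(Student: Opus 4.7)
My plan is to verify the chain of four functors in three steps: the first and last isomorphisms are formal cancellations using $j^*j_! \simeq \mathrm{id}$ for closed inclusions $j$, while the middle isomorphism is the one place where the covering hypothesis $X = Z_1 \cup Z_2$ enters. After performing the two outer cancellations, the middle reduces to the non-standard base-change-type identity
$$\hat\jmath_1^*j_2^! \;\simeq\; \hat\jmath_2^!j_1^*,$$
which is the main obstacle and does not hold for general Cartesian squares of closed inclusions.

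For the outer isomorphisms, I would factor $m$ in its two natural ways. Writing $m = j_2\hat\jmath_1$ gives $m^* = \hat\jmath_1^*j_2^*$, so
$$m^*j_2{}_!j_2^! \;=\; \hat\jmath_1^*(j_2^*j_2{}_!)j_2^! \;\simeq\; \hat\jmath_1^*j_2^!,$$
using $j_2^*j_2{}_! \simeq \mathrm{id}$ since $j_2$ is a closed embedding. Writing instead $m = j_1\hat\jmath_2$ and performing two analogous cancellations via $j_1^*j_1{}_* \simeq \mathrm{id}$ and $\hat\jmath_2^*\hat\jmath_2{}_! \simeq \mathrm{id}$ yields
$$m^*j_1{}_*\hat\jmath_2{}_!\hat\jmath_2^!j_1^* \;=\; \hat\jmath_2^*(j_1^*j_1{}_*)\hat\jmath_2{}_!\hat\jmath_2^!j_1^* \;\simeq\; (\hat\jmath_2^*\hat\jmath_2{}_!)\hat\jmath_2^!j_1^* \;\simeq\; \hat\jmath_2^!j_1^*.$$

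For the reduced middle claim, my candidate natural transformation $\alpha: \hat\jmath_1^*j_2^! \to \hat\jmath_2^!j_1^*$ is the base change morphism referred to in Proposition~3.1.9(iii) of \cite{kashsch}. Concretely, proper base change (applicable because $j_2$ is proper) supplies a natural isomorphism $j_1^*j_2{}_! \simeq \hat\jmath_2{}_!\hat\jmath_1^*$; composing with $j_1^*$ of the counit $j_2{}_!j_2^! \to \mathrm{id}$ produces a morphism $\hat\jmath_2{}_!\hat\jmath_1^*j_2^! \to j_1^*$, whose image under the $(\hat\jmath_2{}_!,\hat\jmath_2^!)$-adjunction is $\alpha$.

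Showing that $\alpha$ is an isomorphism is where the hypothesis does its real work. Because $X = Z_1 \cup Z_2$, the open set $U := X \setminus Z_2$ is contained in $Z_1$ and coincides with $Z_1 \setminus (Z_1 \cap Z_2)$; writing $v: U \hookrightarrow X$ and $u: U \hookrightarrow Z_1$ for the open inclusions, we have $v = j_1 u$, and $j_1^*j_1{}_* \simeq \mathrm{id}$ gives a canonical isomorphism $j_1^*Rv_*v^* \simeq Ru_*u^*j_1^*$. Pulling back via $j_1^*$ the standard closed–open triangle
$$j_2{}_!j_2^!\Adot \to \Adot \to Rv_*v^*\Adot \to [1]$$
on $X$ and comparing with the closed–open triangle
$$\hat\jmath_2{}_!\hat\jmath_2^!j_1^*\Adot \to j_1^*\Adot \to Ru_*u^*j_1^*\Adot \to [1]$$
on $Z_1$ yields two triangles with identified middle terms, identified right terms, and matching middle-to-right maps, which forces the first terms to be isomorphic; applying $\hat\jmath_2^*$ and using the outer cancellations then identifies this iso with $\alpha$. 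The one delicate point is that cones in a triangulated category are only weakly functorial, so the iso of first terms produced by triangle comparison must be shown to coincide with $\alpha$; if that identification is uncomfortable, the cleanest safeguard is to check $\alpha$ stalkwise at each $p \in Z_1 \cap Z_2$, where both sides compute the same local cohomology because small $X$-neighborhoods $V$ of $p$ differ from $V \cap Z_1$ only by the open set $V \cap (X \setminus Z_1)$, whose extension-by-zero sheaf has vanishing stalk at $p \in Z_1$.
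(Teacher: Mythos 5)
Your proof is correct, and the two outer cancellations coincide with the paper's. For the middle isomorphism, however, you take a genuinely different route: you explicitly build $\alpha:\hat\jmath_1^*j_2^!\to\hat\jmath_2^!j_1^*$ from proper base change and the $(\hat\jmath_{2!},\hat\jmath_2^!)$-adjunction (indeed the morphism of Proposition 3.1.9(iii) of Kashiwara--Schapira, which the paper also references as the map being proved an isomorphism), and then argue that $\alpha$ is an isomorphism by comparing the closed--open triangle for $Z_2\subseteq X$, pulled back by $j_1^*$, with the one for $Z_1\cap Z_2\subseteq Z_1$, using that $X=Z_1\cup Z_2$ forces the open pieces $X\setminus Z_2$ and $Z_1\setminus(Z_1\cap Z_2)$ to coincide. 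The paper instead stays entirely inside a chain of manifestly natural isomorphisms: it first rewrites $m^*{j_1}_*\hat\jmath_{2!}\hat\jmath_2^!j_1^*$ as $m^*{j_2}_!j_2^!{j_1}_*j_1^*$ via base change, then applies $m^*{j_2}_!j_2^!$ to the triangle ${i_1}_!i_1^!\to\operatorname{id}\to{j_1}_*j_1^*$ and shows $m^*{j_2}_!j_2^!{i_1}_!i_1^!=0$ from the two elementary vanishings $i_2^*{i_1}_!=0$ (where the covering hypothesis enters) and $j_1^*{i_1}_!=0$; the unit map $m^*{j_2}_!j_2^!\to m^*{j_2}_!j_2^!{j_1}_*j_1^*$ is then the desired isomorphism, and naturality is automatic. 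This sidesteps exactly the delicacy you flag: triangle comparison produces \emph{some} isomorphism of first terms, not obviously $\alpha$. Your stalkwise safeguard does close that gap, since a morphism in $D^b$ that is a natural transformation and a stalkwise isomorphism is a natural isomorphism; and in fact the stalk observation you invoke --- that $p\in Z_1$ kills the extension by zero from $X\setminus Z_1$ --- is the very same fact $j_1^*{i_1}_!=0$ on which the paper's vanishing rests. It would be worth spelling out that stalk comparison precisely (via $v=j_1u$ and $(j_{1*}B)_p\cong B_p$ for $p\in Z_1$) rather than leaving it at one informal sentence.
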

\begin{proof} Let $i_1:X-Z_1\hookrightarrow X$ and $i_2:X-Z_2\hookrightarrow X$ denote the open inclusions. We make use of Proposition 2.6.7 of \cite{kashsch} on Cartesian squares repeatedly. We also use repeatedly that, if $j$ is a closed inclusion, then $j_*\simeq j_!$ and $j^*j_*\simeq j^*j_!\simeq \operatorname{id}$.

\medskip

We find
$$
m^*{j_2}_!j_2^! \ = \ (j_2\hat\jmath_1)^*{j_2}_!j_2^! \ \simeq \ \hat\jmath_1^*j_2^*{j_2}_!j_2^! \ \simeq \ \hat\jmath_1^*j_2^!, 
$$
which proves the first isomorphism in \eqref{eq}.

We also find
$$
m^*{j_1}_*{\hat\jmath}_2{}_!\hat\jmath_2^!j_1^* \ = \ (j_1\hat\jmath_2)^*{j_1}_*{\hat\jmath}_2{}_!\hat\jmath_2^!j_1^* \ \simeq \ \hat\jmath_2^*j_1^*{j_1}_*{\hat\jmath}_2{}_!\hat\jmath_2^!j_1^*\ \simeq \ \hat\jmath_2^*{\hat\jmath}_2{}_!\hat\jmath_2^!j_1^*\ \simeq \ \hat\jmath_2^!j_1^*,
$$
which proves the last isomorphism in \eqref{eq}.

\medskip

The middle isomorphism takes a bit more work.

\smallskip

As a first step, we find 
$$
{j_1}_*{\hat\jmath}_2{}_!\hat\jmath_2^!j_1^*  \ \simeq \ {j_1}_*{\hat\jmath}_2{}_*\hat\jmath_2^!j_1^* \ \simeq \ ({j_1\hat\jmath}_2)_*\hat\jmath_2^!j_1^* \ = \ ({j_2\hat\jmath}_1)_*\hat\jmath_2^!j_1^* \ \simeq \ {j_2}_*{\hat\jmath}_1{}_*\hat\jmath_2^!j_1^* \ \simeq
$$
$$
{j_2}_*j_2^!{j_1}_*j_1^* \ \simeq \ {j_2}_!j_2^!{j_1}_*j_1^*.
$$

\bigskip

Therefore, we need to show that
\begin{equation}\label{eq2}
m^*{j_2}_!j_2^! \ \simeq \ m^*{j_2}_!j_2^!{j_1}_*j_1^*.
\end{equation}

\bigskip

Consider the natural distinguished triangle
$$
{i_1}_!i_1^!\rightarrow\operatorname{id}\rightarrow {j_1}_*j_1^*\arrow{[1]}.
$$
Applying $m^*{j_2}_!j_2^!$ to this triangle, we obtain the natural distinguished triangle
$$
m^*{j_2}_!j_2^!{i_1}_!i_1^!\rightarrow m^*{j_2}_!j_2^!\rightarrow m^*{j_2}_!j_2^!{j_1}_*j_1^*\arrow{[1]}.
$$

We claim that $m^*{j_2}_!j_2^!{i_1}_!i_1^!=0$, which would yield \eqref{eq2} and finish the proof. We show this in two steps: we first show that ${j_2}_!j_2^!{i_1}_!i_1^!\cong {i_1}_!i_1^!$ and then that $m^*{i_1}_!i_1^!=0$.

\bigskip

Consider the natural distinguished triangle
$$
{j_2}_!j_2^!\rightarrow\operatorname{id}\rightarrow {i_2}_*i_2^*\arrow{[1]},
$$
applied to ${i_1}_!i_1^!$; this yields the natural distinguished triangle
$$
{j_2}_!j_2^!{i_1}_!i_1^!\rightarrow {i_1}_!i_1^!\rightarrow {i_2}_*i_2^*{i_1}_!i_1^!\arrow{[1]},
$$
However, $i_2^*{i_1}_! =0$, since $X=Z_1\cup Z_2$ and, thus, we conclude that the natural map ${j_2}_!j_2^!{i_1}_!i_1^!\rightarrow {i_1}_!i_1^!$ is an isomorphism.

It remains for us to show that $m^*{i_1}_!i_1^!=0$, but this is trivial, because 
$$
m^*{i_1}_!i_1^! \ = \ (j_1{\hat\jmath}_2)^*{i_1}_!i_1^! \ \simeq \ {\hat\jmath}_2^*j_1^*{i_1}_!i_1^!,
$$
and $j_1^*{i_1}_!=0$, since $j_1$ and $i_1$ are inclusions of disjoint sets.
\end{proof}

\section{The Main Theorem}

Let $f:X\rightarrow \C$ be complex analytic, and let $\Adot\in D^b_c(X)$. For any real number $\theta$, let 
$$Z_\theta:= f^{-1}\big(e^{i\theta}\{v\in\C \ | \ \operatorname{Re} v\leq 0\}\big)
$$
and let
$$
L_\theta:= f^{-1}\big(e^{i\theta}\{v\in\C \ | \ \operatorname{Re} v= 0\}\big).
$$
Let $j_\theta:Z_\theta\hookrightarrow X$ and $p:f^{-1}(0)\hookrightarrow X$ denote the inclusions.

Following Exercise VIII.13 of \cite{kashsch} (but reversing the inequality, and using a different shift), we define (or characterize up to natural isomorphism) the shifted vanishing cycles of $\Adot$ along $f$ to be
$$
\phi_f[-1]\Adot  \ := \ p^*R\Gamma_{Z_0}(\Adot) \ \simeq \ p^*{j_0}_!j_0^!\Adot.
$$
In fact, for each $\theta$, we define the shifted vanishing cycles of $\Adot$ along $f$ at the angle $\theta$ to be
$$
\phi^\theta_f[-1]\Adot  \ :=  p^*{j_\theta}_!j_\theta^!\Adot.
$$
There are the well-known natural isomorphisms $\widetilde T^\theta_f:\phi^0_f[-1]\rightarrow \phi^\theta_f[-1]$, induced by rotating $\C$ counterclockwise by an angle $\theta$ around the origin. The natural isomorphism $\widetilde T^{2\pi}_f:\phi_f[-1]\rightarrow \phi_f[-1]$ is the usual monodromy automorphism on the vanishing cycles.

\smallskip

We now prove the main theorem.

\smallskip

\begin{thm} There is a natural isomorphism of functors from $D^b_c(X)$ to $D^b_c(f^{-1}(0))$:
$$
\phi_f[-1]\circ\vdual \ \simeq \ \vdual\circ\phi_f[-1].
$$
\end{thm}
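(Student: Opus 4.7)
The plan is to reduce the theorem to Lemma~\ref{lem1} by applying Lemma~\ref{lem2} to the closed cover $X = Z_0 \cup Z_\pi$ (whose intersection is $L_0$), and then invoking the rotation isomorphism $\widetilde T^\pi_f$. Throughout, write $\ell : L_0 \hookrightarrow X$ and $r : f^{-1}(0) \hookrightarrow L_0$ for the inclusions, so that $p = \ell \circ r$.

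First I would apply Lemma~\ref{lem2} with $Z_1 = Z_0$ and $Z_2 = Z_\pi$ (so $\hat\jmath_1 : L_0 \hookrightarrow Z_\pi$ and $\hat\jmath_2 : L_0 \hookrightarrow Z_0$). Specializing the isomorphism $m^* {j_1}_!j_1^! \simeq \hat\jmath_1^!j_2^*$ (the swapped version of the first isomorphism in \eqref{eq}) gives $\ell^* {j_0}_! j_0^! \simeq \hat\jmath_1^! j_\pi^*$, and hence
$$
\phi_f[-1]\Adot \ \simeq \ r^* \hat\jmath_1^! j_\pi^* \Adot.
$$
Dualizing (via the standard $\vdual r^* \simeq r^! \vdual$, $\vdual \hat\jmath_1^! \simeq \hat\jmath_1^* \vdual$, and $\vdual j_\pi^* \simeq j_\pi^! \vdual$), and then applying \eqref{eq} once more to replace $\hat\jmath_1^* j_\pi^!$ by $\hat\jmath_2^! j_0^*$, one obtains
$$
\vdual \phi_f[-1]\Adot \ \simeq \ r^! \hat\jmath_1^* j_\pi^! \vdual \Adot \ \simeq \ r^! \hat\jmath_2^! j_0^* \vdual \Adot.
$$
On the other hand, Lemma~\ref{lem2} applied directly to the definition of $\phi^\pi_f[-1]$ yields
$$
\phi^\pi_f[-1] \vdual \Adot \ = \ p^* {j_\pi}_! j_\pi^! \vdual \Adot \ \simeq \ r^* \hat\jmath_2^! j_0^* \vdual \Adot.
$$
Setting $\Cdot := \hat\jmath_2^! j_0^* \vdual \Adot \in D^b(L_0)$, the theorem will follow from a natural isomorphism $r^! \Cdot \simeq r^* \Cdot$ composed with the inverse of $\widetilde T^\pi_f : \phi_f[-1] \vdual \Adot \to \phi^\pi_f[-1] \vdual \Adot$.

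The hard part will be verifying the hypothesis of Lemma~\ref{lem1} for $r$ applied to $\Cdot$: that $f^{-1}(0) \cap \supp \Cdot$ is open in $\supp \Cdot$. I will establish the stronger local statement that $\supp \Cdot$ sits inside $f^{-1}(0)$ in a neighborhood of $f^{-1}(0)$. Using the Lemma~\ref{lem2} identification $\Cdot \simeq \ell^* {j_\pi}_! j_\pi^! \vdual \Adot$, the stalk of $\Cdot$ at $y \in L_0$ with $f(y) = it_0$ equals $(R\Gamma_{Z_\pi} \vdual \Adot)_y$. Since $Z_\pi$ depends only on the real part of $f$, we have $Z_\pi = (f - it_0)^{-1}(\{\operatorname{Re} v \geq 0\})$ as well, and hence this stalk coincides with the stalk at $y$ of $\phi^\pi_{f - it_0}[-1] \vdual \Adot$.

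Local isolation of stratified critical values now finishes the argument: in a sufficiently small neighborhood of any $p \in f^{-1}(0)$, the only stratified critical value of $f$ is $0$, so for $y$ near $p$ with $f(y) = it_0 \neq 0$ the value $it_0$ is not a critical value of $f$. Hence $f - it_0$ is a stratified submersion near $y$, and by the standard vanishing of shifted vanishing cycles at non-critical points, $\phi^\pi_{f - it_0}[-1] \vdual \Adot|_y = 0$. Thus $\Cdot_y = 0$, as required.
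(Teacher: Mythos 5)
Your proof is correct and follows essentially the same route as the paper: apply Lemma~\ref{lem2} to the closed cover $X = Z_0 \cup Z_\pi$, dualize, invoke the rotation isomorphism $\widetilde T^\pi_f$, and then reduce to Lemma~\ref{lem1} via local isolation of stratified critical values. The only cosmetic difference is that you verify the $r^! \simeq r^*$ step for the complex $\hat\jmath_2^! j_0^* \vdual\Adot \simeq \ell^*{j_\pi}_! j_\pi^! \vdual\Adot$, whereas the paper verifies it for $\ell^*{j_0}_! j_0^! \Adot$; the underlying support argument is identical.
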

\begin{proof} Let $m$ denote the inclusion of $L_0=L_\pi$ into $X$. Now, apply \lemref{lem2} to $X=Z_0\cup Z_\pi$, and conclude that
$$
m^*{j_0}_!j_0^! \ \simeq \ \hat\jmath_0^!j_\pi^*.
$$
Dualizing, we obtain
\begin{equation}\label{eq3}
\vdual\left(m^*{j_0}_!j_0^!\right) \ \simeq \ \vdual\left(\hat\jmath_0^!j_\pi^*\right) \ \simeq \ \hat\jmath^*_0j^!_\pi\vdual \ \simeq  \ m^*{j_\pi}_!j_\pi^!\vdual,
\end{equation}
where the second isomorphism uses that $\vdual$ ``commutes'' with the standard operations, and the last isomorphism results from using that $m=j_\pi\hat\jmath_0$.

Let $q$ denote the inclusion of $f^{-1}(0)$ into $L_0=L_\pi$, so that the inclusion $p$ equals $mq$. Applying $q^*$ to \eqref{eq3}, we obtain 
$$
q^*\vdual\left(m^*{j_0}_!j_0^!\right) \ \simeq \ q^*m^*{j_\pi}_!j_\pi^!\vdual \ \simeq \ p^*{j_\pi}_!j_\pi^!\vdual  = \phi^\pi_f[-1]\circ\vdual \ \simeq \ \phi_f[-1]\circ\vdual,
$$
where, in the last step, we used the natural isomorphism $\left(\widetilde T^{\pi}_f\right)^{-1}$.

As $\vdual\left(q^!m^*{j_0}_!j_0^!\right)\simeq q^*\vdual\left(m^*{j_0}_!j_0^!\right)$, it remains for us to show that $q^!m^*{j_0}_!j_0^!$ is naturally isomorphic to $q^*m^*{j_0}_!j_0^!\simeq p^*{j_0}_!j_0^!\simeq \phi_f[-1]$.  This will follow from \lemref{lem1}, once we show that, for all $\Adot\in D^b_c(X)$, $f^{-1}(0)\cap \operatorname{supp}(m^*{j_0}_!j_0^!\Adot)$ is an open subset of $\operatorname{supp}(m^*{j_0}_!j_0^!\Adot)$.

Suppose that $x\in f^{-1}(0)\cap \operatorname{supp}(m^*{j_0}_!j_0^1\Adot)$. We need to show that there exists an open neighborhood $\W$ of $x$ in $X$ such that $\W\cap \operatorname{supp}(m^*{j_0}_!j_0^1\Adot)\subseteq f^{-1}(0)$. 

Fix a Whitney stratification of $X$, with respect to which $\Adot$ is constructible. Then, select $\W$ so that all of the stratified critical points of $f$, inside $\W$, are contained in $f^{-1}(0)$. Suppose that there were a point $y\in\W$ such that $y\in f^{-1}(L_0)-f^{-1}(0)$ and the stalk cohomology of $m^*{j_0}_!j_0^!\Adot$ at $y$ is non-zero. Then, by definition, $y$ would be a point in the support $\phi_{f-f(y)}[-1]\Adot$, which, again, is contained in the stratified critical locus of $f$ and, hence, is contained in $f^{-1}(0)$. This contradiction concludes the proof.
\end{proof}

\smallskip

We continue to let $p:f^{-1}(0)\hookrightarrow X$ denote the closed inclusion, and now let $i:X-f^{-1}(0)\hookrightarrow X$ denote the open inclusion. Consider the two fundamental distinguished triangles related to the nearby and vanishing cycles:

$$
p^*[-1]\arrow{\operatorname{comp}}\psi_f[-1]\arrow{\operatorname{can}}\phi_f[-1]\arrow{[1]}
$$
and
$$
\phi_f[-1]\arrow{\operatorname{var}}\psi_f[-1]\rightarrow p^![1]\arrow{[1]}.
$$
The morphisms $\operatorname{comp}$, $\operatorname{can}$, and $\operatorname{var}$ are usually referred to as the {\it comparison map}, {\it canonical map}, and {\it variation map}. As $p^*i_!=0= p^!i_*$ and as $\psi_f[-1]$ depends only on the complex outside of $f^{-1}(0)$, the top triangle, applied to $i_!i^!$ and the bottom triangle applied to $i_*i^*$ yield natural isomorphisms 
$$\alpha:\psi_f[-1]\arrow{\simeq}\phi_f[-1]i_!i^!\hskip .5in\textnormal{and}\hskip .5in  \beta:\phi_f[-1]i_*i^*\arrow{\simeq}\psi_f[-1].
$$

\begin{cor}  There is a natural isomorphism of functors from $D^b_c(X)$ to $D^b_c(f^{-1}(0))$:
$$
\psi_f[-1]\circ\vdual \ \simeq \ \vdual\circ\psi_f[-1].
$$
\end{cor}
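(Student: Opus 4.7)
The plan is to reduce the nearby-cycle statement to the vanishing-cycle theorem just proved, using the two natural isomorphisms $\alpha$ and $\beta$ that the authors have exhibited immediately before the corollary. These identify $\psi_f[-1]$ with two different forms built from $\phi_f[-1]$ applied to complexes supported off of $f^{-1}(0)$: one uses $i_!i^!$, the other uses $i_*i^*$. Since Verdier duality swaps $i_!$ with $i_*$ and $i^!$ with $i^*$ (and all of these swaps are natural), one can convert an $\alpha$-style expression to a $\beta$-style expression after applying $\vdual$.

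Concretely, I would first record the standard natural isomorphisms $\vdual\, i_! \simeq i_*\,\vdual$ and $\vdual\, i^! \simeq i^*\,\vdual$ (and their inverses obtained by applying $\vdual$ again and using $\vdual^2\simeq\operatorname{id}$), so that the composite functor $i_!i^!\,\vdual$ is naturally isomorphic to $\vdual\, i_*i^*$. Then I would assemble the chain
\begin{equation*}
\psi_f[-1]\circ\vdual \;\xrightarrow{\alpha\,\vdual}\; \phi_f[-1]\,i_!i^!\,\vdual \;\simeq\; \phi_f[-1]\,\vdual\,i_*i^* \;\simeq\; \vdual\,\phi_f[-1]\,i_*i^* \;\xrightarrow{\vdual\beta}\; \vdual\circ\psi_f[-1],
\end{equation*}
where the middle isomorphism is the main theorem applied to complexes of the form $i_*i^*\Adot$. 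Each arrow is natural in $\Adot$, so the composite is a natural isomorphism of functors $D^b_c(X)\to D^b_c(f^{-1}(0))$.

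There is no real obstacle here; the only mild point to be careful about is that we need the theorem applied to $i_*i^*\Adot$, which is a complex in $D^b_c(X)$ whose support lies in $\overline{X-f^{-1}(0)}$ rather than $f^{-1}(0)$, but the theorem is stated for arbitrary $\Adot\in D^b_c(X)$, so this causes no trouble. One should also verify that the isomorphisms $\alpha$ and $\beta$ are genuinely \emph{natural} in $\Adot$, which is immediate from their construction out of the two fundamental distinguished triangles together with the vanishing $p^*i_!=0=p^!i_*$, both of which are natural. Thus the corollary follows formally from the theorem plus the bookkeeping of the four Grothendieck operations under $\vdual$.
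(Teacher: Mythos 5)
Your proof is correct and is essentially identical to the paper's: it uses $\alpha$, $\beta$, the natural swaps $\vdual\,i_!\simeq i_*\,\vdual$ and $\vdual\,i^!\simeq i^*\,\vdual$, and the main theorem, in the same order. The only cosmetic difference is that you split the paper's middle isomorphism $\phi_f[-1]\,i_!i^!\,\vdual \simeq \vdual\,\phi_f[-1]\,i_*i^*$ into two explicit substeps.
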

\begin{proof}
$$
\psi_f[-1]\circ\vdual \ \simeq \ \phi_f[-1]i_!i^!\circ\vdual \ \simeq \ \vdual\circ \phi_f[-1]i_*i^* \ \simeq \ \vdual\circ \psi_f[-1].
$$
\end{proof}

\newpage
\bibliographystyle{plain}
\bibliography{Masseybib}
\end{document}